\newtheorem{theorem}{Theorem}[section]
\newtheorem{claim}[theorem]{Claim}
\newtheorem{lemma}[theorem]{Lemma}
\theoremstyle{definition}
\newtheorem{definition}[theorem]{Definition}
\theoremstyle{remark}
\newtheorem{notation}[theorem]{Notation}
\newtheorem{hypotheses}[theorem]{Hypotheses}
\newcommand{\dom}{{\rm dom}}
\newcommand{\rng}{{\rm rng}}
\newcommand{\Lim}{{\rm Lim}}
\newcommand{\cH}{{\mathscr H}}
\newcommand{\cI}{{\mathscr I}}
\newcommand{\MPB}{{\mathbb P}}
\newcommand{\Gen}{{\rm Gen}}
\newcommand{\CH}{{\rm CH}}
\def\MPB{{\mathbb{P}}}
\def\MQB{{\mathbb{Q}}}
\def\MRB{{\mathbb{R}}}
\def\mathunderaccent#1#2 {\let\theaccent#1\skewfactor#2
\mathpalette\putaccentunder}
\def\putaccentunder#1#2{\oalign{$#1#2$\crcr\hidewidth
\vbox to.2ex{\hbox{$#1\skew\skewfactor\theaccent{}$}\vss}\hidewidth}}
\begin{document}

\title {Adding highly undefinable sets over $L$}

\author[M.  Golshani]{Mohammad Golshani}

\address{Mohammad Golshani, School of Mathematics, Institute for Research in Fundamental Sciences (IPM), P.O.\ Box:
	19395--5746, Tehran, Iran.}

\email{golshani.m@gmail.com}
\urladdr{http://math.ipm.ac.ir/~golshani/}

\author[S. Shelah]{Saharon Shelah}
\address{Einstein Institute of Mathematics,
The Hebrew University of Jerusalem,
9190401, Jerusalem, Israel; and\\
Department of Mathematics,
Rutgers University,
Piscataway, NJ 08854-8019, USA}
\urladdr{https://shelah.logic.at/}
\email{shelah@math.huji.ac.il}
\thanks{ The first author's research has been supported by a grant from IPM (No. 1401030417). The second author thanks an individual who wishes to remain anonymous for generously funding typing services.  The
	second author would like to thank the Israel Science Foundation (ISF) for partially supporting this research by grant No 1838/19.
This is publication number
1227
in Saharon Shelah's list.}

\subjclass[2020]{Primary: 03E35, 03E45, 03E55}

\keywords {proper forcing,  undefinability.}


\begin{abstract}
We sow that there exists a generic extension of the G\"{o}del's constructible universe in which diamond holds and there exists a subset  $Y \subseteq \omega_1$ such that for stationary many $\delta < \omega_1,$ the set
 $Y \cap \delta$ is not definable in the structure $(L_{F(\delta)}, \in)$, where $F(\delta) > \delta$ is the least ordinal
 such that $L_{F(\delta)}\models$``$\delta$ is countable''.
\end{abstract}

\maketitle
\numberwithin{equation}{section}
\section{introduction}

In this short note we introduce a $\Sigma^2_2$ sentence $\phi$ which is false in $L$, the G\"{o}del's constructible universe. Furthermore, we force
 a proper generic extension of $L$ in which  both $\phi$ and diamond hold.
 More precisely, let $\phi$ be the following sentence.
 \begin{definition}
Let $\phi$ be the sentence
\[
(\exists R,   S_1, S_2, F, Y) \bigwedge\limits_{i=1}^3 \varphi_i,
\]
where
\begin{enumerate}
\item $\varphi_1:=\varphi_1(R,  F)$ is the conjunction of the following statements:
\begin{enumerate}

\item $(\omega_1, R)$ is isomorphic to $(L_{\omega_1}, \in)$,

\item Let $\Lim(\omega_1)$ be the set of countable limit ordinals. Then $(\Lim(\omega_1), R)$ is isomorphic to
$(\Lim(\omega_1), \in)$,

\item $F$ is a function defined on $\Lim(\omega_1)$, such that for every limit ordinal $\delta$,
\begin{center}
$(\omega_1, R) \models$$\ulcorner F(\delta)$ is the minimal limit ordinal such that $L_{F(\delta)} \models$``$|\delta|=\aleph_0$''$\urcorner$.
\end{center}

\end{enumerate}
\item $\varphi_2:=\varphi_2(S_1, S_2)$ is the statement: $S_1$ and $S_2$ form a partition of $\Lim(\omega_1)$ into disjoint stationary sets.

\item $\varphi_3:=\varphi_3(R,   S_2, F, Y)$ is the statement:
\begin{center}
$(\forall  \delta \in S_2) \big( Y \cap \delta$ is not definable in $(L_{F(\delta)})^{(\omega_1, R)} \big)$.
\end{center}
\end{enumerate}
\end{definition}
 It is clear that $\phi$ is a $\Sigma^2_2$ statement. We prove the following theorem.
 \begin{theorem}
 \label{th1}
 \begin{enumerate}
 \item[(a)] The statement $\phi$ fails in $L$,

 \item[(b)] There exists a proper forcing notion $\MPB \in L$ which forces $\Diamond+\phi.$
 \end{enumerate}
 \end{theorem}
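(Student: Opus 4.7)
For part (a), I would argue by the standard condensation argument in $L$. Assume toward contradiction that $(R, S_1, S_2, F, Y)$ witness $\phi$ in $L$. Via the isomorphism between $(\omega_1, R)$ and $(L_{\omega_1}, \in)$ provided by $\varphi_1$, I identify $Y$ with a subset of $\omega_1$ constructible in $L$ and $F$ with the actual function $\delta \mapsto F(\delta)$ on $\Lim(\omega_1)$. Pick $\beta < \omega_2^L$ with $Y \in L_\beta$. By L\"owenheim--Skolem applied in $L$, for club-many countable $\delta$ there is a countable $M \prec L_\beta$ with $Y \in M$ and $M \cap \omega_1 = \delta$. Its Mostowski collapse is isomorphic to some $L_{\bar\beta}$ in which $\delta = \omega_1^{L_{\bar\beta}}$ remains uncountable, so $\bar\beta < F(\delta)$; the collapse sends $Y$ to $Y \cap \delta$, making $Y \cap \delta$ definable in $L_{\bar\beta}$ (by the image of the formula defining $Y$ in $L_\beta$) and hence definable in $L_{F(\delta)}$. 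Intersecting this club with the stationary $S_2$ contradicts $\varphi_3$.

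For part (b), work in $L$. Using $\Diamond_L$, fix a partition $\Lim(\omega_1) = S_1 \sqcup S_2$ into stationary sets. I would design a proper forcing $\MPB \in L$ adding $Y \subseteq \omega_1$ via bounded approximations $f : \alpha \to 2$ augmented by side conditions, namely finite systems of countable models. The role of the side conditions is twofold: first, to make $\MPB$ proper via the standard side-condition framework; second, for each side-condition model $M$ with $\delta := M \cap \omega_1 \in S_2$, to enforce that $f \restriction \delta$ differs from every subset of $\delta$ definable in $L_{F(\delta)}$. There are only countably many such subsets (since $F(\delta) < \omega_1$), so this is a countable family of diagonalization constraints per $\delta$. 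Given such $\MPB$, the generic $Y$, together with the fixed copy $R$ of $L_{\omega_1}$, the standard $F$, and $(S_1, S_2)$ witness $\phi$ in $L[G]$. Because $|\MPB|^L = \aleph_1$ and $\MPB$ preserves $\omega_1$ without adding reals, $\CH$ holds in $L[G]$ and $\Diamond$ can either be preserved from $\Diamond_L$ or explicitly forced by piggybacking the construction along $S_1$.

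The main technical obstacle is the design of the side conditions so that they can ``see'' $L_{F(\delta)}$. A standard elementary submodel $M \prec L_\theta$ with $M \cap \omega_1 = \delta$ cannot contain $L_{F(\delta)}$ because $F(\delta) > \delta$, so the side conditions must incorporate additional transitive data -- carrying $L_{F(\delta)}$ itself as an auxiliary element, or enumerating the countable family of subsets to be avoided. The crucial verification is the closure under $\omega$-limits: along a descending $\omega$-chain of conditions with $\alpha_n \to \delta \in S_2$, one must arrange via the committed side conditions that the union $f_\omega : \delta \to 2$ still differs from every element of the countable ``bad'' family, so that the limit is itself a condition. Once this compatibility is handled, properness and the $\varphi_3$-diagonalization follow in the expected manner, and $\Diamond$ then follows from the smallness and $\omega_1$-preservation of $\MPB$.
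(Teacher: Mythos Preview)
For (a), your condensation argument is correct and is actually more direct than the paper's route, which detours through the construction of the canonical $\Diamond_S$-sequence before arriving at essentially the same condensation step. One minor imprecision: the phrase ``by the image of the formula defining $Y$ in $L_\beta$'' is not quite right, since $Y$ need not be parameter-free definable in $L_\beta$; what you actually obtain is $Y\cap\delta=\pi(Y)\in L_{\bar\beta}\subseteq L_{F(\delta)}$, which suffices.

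For (b), your outline diverges substantially from the paper and leaves a genuine gap at precisely the point you yourself flag. The paper uses no side conditions: a condition is simply a countable $p\subseteq\omega_1$ possessing a maximum and satisfying ``$p\cap\delta$ is not definable in $L_{F(\delta)}$'' for every $\delta\in S_2$ with $\delta\le\max(p)$; the order is end-extension. The entire difficulty is proving properness when $\delta=N\cap\omega_1\in S_2$: one must produce an $(N,\MPB)$-generic sequence $\langle p_m\rangle$ whose union is itself not definable in $L_{F(\delta)}$. The paper's key device (following Shelah's Whitehead-group argument) is to fix a cofinal $\omega$-sequence $\eta_\delta$ in $\delta$ that \emph{is} definable in $L_{F(\delta)}$ and a real $c_\delta$ that is \emph{not}, and to interleave the genericity steps so that $\eta_\delta(n)\in\bigcup_m p_m$ iff $c_\delta(n)=1$. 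Definability of the union would then transfer to $c_\delta$, a contradiction.

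Your side-condition framework does not supply this. Every model $M$ appearing as a side condition in some $p\in N$ has $M\cap\omega_1<\delta$, so it can only constrain $f\restriction\delta'$ for $\delta'<\delta$, never $f\restriction\delta$ itself; the diagonalization against $L_{F(\delta)}$ must be carried out actively while building the master condition, and nothing in your setup does this. Separately, the paper secures $\Diamond$ not via a size argument but by verifying that $\MPB$ is $\{S_1\}$-complete (for $N$ with $N\cap\omega_1\in S_1$ the union of any generic sequence, together with $\{\delta\}$, is trivially a condition since $\delta\notin S_2$) and then invoking the standard preservation lemma for $\{S\}$-complete forcings. Your ``smallness and $\omega_1$-preservation'' claim is ultimately defensible but is not the mechanism used, and your assertion that the forcing adds no reals is neither needed nor established.
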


 We assume familiarity with proper forcing notions. Given a forcing notion
$\MPB$ and conditions $p, q \in \MPB$, by $p \leq q$ we mean $q$ is stronger than $p.$

\section{Some preliminaries}
\label{prel}
In this paper, we are interested in forcing notions which preserve diamond at $\omega_1$.
Let us recall the definition of a diamond sequence.
\begin{definition}
\label{diamond} (\cite{jensen}) Assume $S \subseteq \omega_1$
is stationary. Then $\Diamond_S$ asserts the existence of a sequence $\langle s_\alpha: \alpha \in S       \rangle$
such that $s_\alpha \subseteq \alpha$, for $\alpha \in S$, and for every $X \subseteq \omega_1$,
the set
$\{ \alpha \in S: X \cap \alpha=s_\alpha\}$ is stationary in $\omega_1$. By $\Diamond$ we mean $\Diamond_{\omega_1}$.
\end{definition}
By the work of Jensen \cite{jensen}, $\Diamond_S$ holds in the G\"{o}del's constructible universe, for all stationary subsets $S$ of $\omega_1$.
We now introduce a property of forcing notions which is sufficient to guarantee that $\Diamond$ is preserved, see Lemma \ref{f21}.
\begin{definition}
(\cite[Ch. V, Definition 1.1]{Sh:f})
\label{f19}
Suppose $S \subseteq \omega_1$ is stationary,  $\MPB$ is a forcing notion and $N$ is a countable model with $\MPB \in N.$
\begin{enumerate}
\item The sequence $\langle p_n: n<\omega \rangle$ is a generic sequence for $(N, \MPB)$ if it is an increasing sequence from $ \MPB \cap N$ and for every dense open subset $D$ of $\MPB$ in $N$, $D \cap \{p_n: n<\omega    \} \neq \emptyset.$

\item The pair $(N, \MPB)$ is complete if every generic sequence $\langle p_n: n<\omega \rangle$ for $(N, \MPB)$ has an upper bound in $\MPB$.

\item We say $\MPB$ is $\{ S\}$-complete if for every large enough regular $\chi$ and  countable model
$N \prec (\cH(\chi), \in)$, if $S, \MPB \in N$ and $N \cap \omega_1 \in S$, then the pair $(N, \MPB)$ is complete.
\end{enumerate}
\end{definition}

\begin{lemma}
\label{f21} (\cite[ Chapter V, Claim 1.9]{Sh:f})
Suppose $S\subseteq \omega_1$ is stationary.
Assume $\MPB$ is $\{S\}$-complete. If $\Diamond_S$
holds in $V$, then it  holds in $V^{\MPB}$ as well.
\end{lemma}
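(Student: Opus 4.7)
The plan is to take a $\Diamond_S$-sequence $\langle s_\delta : \delta \in S \rangle$ in $V$ and show it continues to witness $\Diamond_S$ in the extension $V^\MPB$, by contradiction. So I would fix $p \in \MPB$ and $\MPB$-names $\name{X}, \name{E}$ with $p \Vdash$ ``$\name{X} \subseteq \omega_1$, $\name{E}$ is a club, and $\forall \delta \in \name{E} \cap S$, $\name{X} \cap \delta \neq s_\delta$'', and aim to build, entirely inside $V$, a single set $X^* \subseteq \omega_1$ which, when fed to the ground-model diamond sequence, produces some $\delta \in S$ and an extension $q \geq p$ simultaneously forcing $\delta \in \name{E}$ and $\name{X} \cap \delta = s_\delta$, contradicting the hypothesis.

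I would construct in $V$ a continuous $\subseteq$-increasing chain $\langle N_\alpha : \alpha < \omega_1 \rangle$ of countable $N_\alpha \prec (\cH(\chi), \in)$ for large $\chi$, with $\{p, \MPB, \name{X}, \name{E}, S, \langle s_\delta\rangle\} \subseteq N_0$, and set $\delta_\alpha := N_\alpha \cap \omega_1$ so that $C^* := \{\delta_\alpha : \alpha < \omega_1\}$ is a club. In parallel, at each $\alpha$ with $\delta_\alpha \in S$, I would choose a generic sequence $\bar p^\alpha = \langle p^\alpha_n : n < \omega \rangle$ for $(N_\alpha, \MPB)$ whose first term $p^\alpha_0$ extends $p$ and every previously produced upper bound $q_\beta$ (for $\beta < \alpha$ with $\delta_\beta \in S$). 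By $\{S\}$-completeness, $\bar p^\alpha$ has an upper bound $q_\alpha$; set $Y_\alpha := \{\gamma < \delta_\alpha : \exists n,\ p^\alpha_n \Vdash \gamma \in \name{X}\}$, and arrange $q_\alpha \in N_{\alpha+1}$. Since every $\gamma < \delta_\alpha$ lies in $N_\alpha$ and the upward-closed dense set deciding $\gamma \in \name{X}$ belongs to $N_\alpha$, each such $\gamma$ is decided by some $p^\alpha_n$, so $q_\alpha \Vdash \name{X} \cap \delta_\alpha = Y_\alpha$. As the upper bound of an $(N_\alpha, \MPB)$-generic sequence, $q_\alpha$ inherits $(N_\alpha, \MPB)$-genericity; since $\name{E} \in N_\alpha$ names a club, it also forces $\delta_\alpha \in \name{E}$.

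For $\alpha < \beta$ with both $\delta_\alpha, \delta_\beta \in S$, the constraint $p^\beta_0 \geq q_\alpha$ gives $q_\beta \Vdash \name{X} \cap \delta_\alpha = Y_\alpha$, hence $Y_\beta \cap \delta_\alpha = Y_\alpha$. Therefore $X^* := \bigcup \{Y_\alpha : \delta_\alpha \in S\}$ is a well-defined subset of $\omega_1$ in $V$ satisfying $X^* \cap \delta_\alpha = Y_\alpha$ whenever $\delta_\alpha \in S$. Applying $\Diamond_S$ in $V$ to $X^*$ yields a stationary set of $\delta \in S$ with $X^* \cap \delta = s_\delta$; intersecting it with the club $C^*$ produces some $\alpha$ with $\delta_\alpha \in S$ and $Y_\alpha = s_{\delta_\alpha}$. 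For this $\alpha$, $q_\alpha \geq p$ forces $\delta_\alpha \in \name{E} \cap S$ together with $\name{X} \cap \delta_\alpha = s_{\delta_\alpha}$, contradicting the choice of $p, \name{X}, \name{E}$.

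The main obstacle is the coherence $Y_\beta \cap \delta_\alpha = Y_\alpha$ and, tied to it, the limit stages of the recursion. Successor stages pose no difficulty: one just demands $p^\beta_0 \geq q_\alpha$, which is legal because $q_\alpha \in N_{\alpha+1} \subseteq N_\beta$. At a limit stage $\alpha$ with $\delta_\alpha \in S$, I would take $\bar p^\alpha := \langle q_{\gamma_k} : k < \omega \rangle$, where $(\gamma_k)_k$ enumerates $\{\gamma < \alpha : \delta_\gamma \in S\}$ in increasing order (a countable set since $\alpha < \omega_1$). This sequence is increasing by construction, lies inside $N_\alpha$, and meets every dense open $D \in N_\alpha$: such $D$ lives in some $N_{\gamma_k}$, so $\bar p^{\gamma_{k+1}}$ meets $D$ and, by upward closure of $D$, so does $q_{\gamma_{k+1}}$. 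Hence $\bar p^\alpha$ is $(N_\alpha, \MPB)$-generic, $\{S\}$-completeness supplies $q_\alpha \geq q_{\gamma_k}$ for every $k$, and coherence propagates uniformly through $\omega_1$, enabling the single application of $\Diamond_S$ that closes the argument.
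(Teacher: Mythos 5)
The paper does not actually prove this lemma --- it cites \cite[Ch.~V, Claim 1.9]{Sh:f} --- so your argument has to stand on its own, and as written it does not: the recursion that is supposed to produce the coherent sets $Y_\alpha$ breaks down at limit stages. Your limit-stage argument rests on the claim that every dense open $D\in N_\alpha$ lies in some $N_{\gamma_k}$, which requires $A_\alpha:=\{\gamma<\alpha:\delta_\gamma\in S\}$ to be cofinal in $\alpha$. Consider instead a limit $\alpha$ with $\delta_\alpha\in S$ for which $A_\alpha$ is infinite but $\beta^*:=\sup A_\alpha<\alpha$; then necessarily $\delta_{\beta^*}\notin S$ (otherwise $\beta^*$ would be the maximum of $A_\alpha$ and there would be no difficulty). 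In that situation $\langle q_{\gamma_k}:k<\omega\rangle$ is a generic sequence only for $(N_{\beta^*},\MPB)$, a model with $N_{\beta^*}\cap\omega_1\notin S$, so $\{S\}$-completeness gives it no upper bound; there need be no condition at all above every $q_{\gamma_k}$, hence no admissible $p^\alpha_0$, and the recursion --- together with every later active stage, which faces the same unbounded family --- dies. This configuration cannot be avoided by thinning to a club: whenever $S$ is co-stationary (as $S=S_1$ is in this paper, $S_2\subseteq\omega_1\setminus S_1$ being stationary), the club of accumulation points of $S$ meets $\omega_1\setminus S$, and for such a point $\mu$ the ordinal $\min(S\setminus(\mu+1))$ is exactly a bad stage. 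Nor is the failure hypothetical for the forcing at hand: for $N$ with $N\cap\omega_1=\delta\in S_2$, nothing prevents a generic sequence for $(N,\MPB)$ from having union definable in $(L_{F(\delta)})^{(\omega_1,R)}$, and then it has no upper bound by clause $(\ast)_3$.

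The difficulty is structural rather than cosmetic: your plan needs a single $X^*\in V$ such that $X^*\cap\delta_\alpha$ is realized by some bounded generic sequence for \emph{every} $\alpha$ with $\delta_\alpha\in S$ on a club, and $\{S\}$-completeness simply does not allow an increasing chain of conditions to be threaded past an intermediate model whose trace on $\omega_1$ falls outside $S$. The successor steps, the computation of $Y_\alpha$, the observation that $q_\alpha$ is $(N_\alpha,\MPB)$-generic and forces $\delta_\alpha\in\name{E}$, and the final appeal to $\Diamond_S$ are all fine; it is precisely the gluing into one $X^*$ that fails, and that is the heart of the matter. The arguments in the literature (in particular Shelah's cited Claim V.1.9) are organized so as not to require this global coherence; as it stands, your proof does not establish the lemma.
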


\section{Proof of the main theorem}
\label{mainproof}
In this section we prove Theorem \ref{th1}.
\subsection{$\phi$ fails in $L$.}
In this subsection we show that the statement $\phi$ is false in the constructible universe $L$. This follows from the following lemma.
\begin{lemma}
\label{lem1} Assume $V=L.$ Let $S\subseteq \omega_1$ be stationary. Then for every
$X \subseteq \omega_1$, there exists $\delta \in S$ such that $X \cap \delta$ is definable in $(L_{F(\delta)}, \in)$.
\end{lemma}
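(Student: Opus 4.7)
The plan is a direct application of Jensen's condensation lemma in $L$. Given $X \subseteq \omega_1$ and $S \subseteq \omega_1$ stationary, I first replace $S$ by $S \cap \Lim(\omega_1)$, which remains stationary; this lets me assume every candidate $\delta \in S$ is a limit ordinal. Since $V=L$, both $X$ and $S$ belong to $L_{\omega_2}$. By a standard argument, the collection of countable $M \prec (L_{\omega_2}, \in)$ with $X, S \in M$ and $M \cap \omega_1 \in S$ is stationary in $[L_{\omega_2}]^{\aleph_0}$. Pick any such $M$ and set $\delta := M \cap \omega_1 \in S$.

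Let $\pi \colon M \to L_{\bar\beta}$ be the Mostowski collapse. By condensation, the transitive image is of the form $L_{\bar\beta}$ for some countable limit ordinal $\bar\beta$. Since $M \cap \omega_1 = \delta$ is an initial segment, $\pi$ fixes every ordinal below $\delta$ pointwise and satisfies $\pi(\omega_1) = \delta$; consequently $\pi(X) = X \cap \delta$, so $X \cap \delta \in L_{\bar\beta}$.

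Next, compare $\bar\beta$ with $F(\delta)$. By elementarity, $M \models$ ``$\omega_1$ is the first uncountable cardinal'', so $L_{\bar\beta} \models$ ``$\delta$ is uncountable''. By definition of $F$, we have $L_{F(\delta)} \models$ ``$|\delta| = \aleph_0$''. If $\bar\beta \geq F(\delta)$, then any bijection $\omega \to \delta$ witnessing countability in $L_{F(\delta)}$ would already lie in $L_{\bar\beta}$, contradicting the uncountability of $\delta$ there. Hence $\bar\beta < F(\delta)$, and consequently $X \cap \delta \in L_{\bar\beta} \subseteq L_{F(\delta)}$.

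Finally, since $L_{F(\delta)}$ is an initial segment of $L$, it carries a well-ordering definable within itself from ordinal parameters, so every element of $L_{F(\delta)}$, and in particular $X \cap \delta$, is definable in $(L_{F(\delta)}, \in)$ from finitely many ordinals. The argument is routine; the only points requiring attention are the verification that $\bar\beta < F(\delta)$ (handled by the countability witness just described) and the choice of the appropriate sense of ``definable'' (allowing ordinal parameters, without which the statement of $\phi$ would be trivially false).
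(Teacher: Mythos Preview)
Your proof is correct and takes a more direct route than the paper's. The paper proceeds in three stages: it first builds the canonical $\Diamond_S$-sequence $\langle (s_\delta,c_\delta):\delta\in S\rangle$ in $L$, then uses a condensation argument (your argument, essentially) to show that on a club of $\delta$'s the pair $(s_\delta,c_\delta)$ is definable in $L_{F(\delta)}$, and finally invokes the diamond property to find $\delta$ with $X\cap\delta=s_\delta$. You bypass the diamond sequence entirely and apply condensation directly to $X$: pick $M\prec L_{\omega_2}$ with $X\in M$ and $\delta=M\cap\omega_1\in S$, collapse to $L_{\bar\beta}$, and read off $X\cap\delta\in L_{\bar\beta}\subseteq L_{F(\delta)}$. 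The paper's detour buys a slightly stronger conclusion (a single fixed sequence witnesses definability for \emph{every} $X$ on a stationary set of $\delta$'s), but that strengthening is not needed for the lemma as stated.

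One minor quibble: your closing parenthetical, that without ordinal parameters ``the statement of $\phi$ would be trivially false,'' is backwards. Parameter-free definability is the \emph{stronger} notion, so ``not definable'' becomes \emph{easier} and $\phi$ becomes easier to satisfy, not harder. That said, your reading of ``definable'' as ``definable from ordinal parameters'' (equivalently, ``is an element of $L_{F(\delta)}$'') is indeed the right one: the paper's own step ``$(s_\mu,c_\mu)$ is definable in $L_\delta$, hence definable in $L_{F(\mu)}$'' (for $\delta<F(\mu)$) only goes through under that interpretation, since parameter-free definability does not in general pass upward from $L_\delta$ to $L_{F(\mu)}$.
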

\begin{proof}
Let us recall the construction of a $\Diamond_S$-sequence. By induction on $\delta$ we define a sequence
$\langle  (s_\delta, c_\delta): \delta \in S        \rangle$
 as follows.
Suppose $\delta \in S$ and we have defined $(s_\gamma, c_\gamma)$,
 for $\gamma \in S \cap \delta$ such that for each $\gamma,$ $s_\gamma \subseteq \gamma$
 and $c_\gamma \subseteq \gamma$
 is a club. If there exists a pair $(s, c)$ such that:
 \begin{enumerate}
 \item $s \subseteq \delta,$

 \item $c \subseteq \delta$ is a club,

 \item for all $\gamma \in  c \cap S, s \cap \gamma \neq s_\gamma$,
 \end{enumerate}
then let $(s_\delta, c_\delta)$ be the $<_L$-least such pair. Otherwise set $s_\delta=\emptyset$
and $c_\delta=\delta$.
\begin{claim}
\label{cl1}
There exists a club $C$ of $\omega_1$ such that
\[
C \cap S \subseteq \{\delta \in S: (s_\delta, c_\delta) \text{~is  definable in ~} (L_{F(\delta)}, \in)  \}.
\]
\end{claim}
\begin{proof}
To see this, suppose by the way of contradiction there is no such club $C$. It then follows that the set
$$S_*=\{\delta \in S: (s_\delta, c_\delta) \text{~is not  definable in ~} (L_{F(\delta)}, \in)  \}$$
is stationary. Let $M$ be a countable elementary submodel of $(L_{\omega_2}, \in)$, such that $M$ contains all relevant information
and $M \cap \omega_1=\mu \in S_*.$
Let also $\pi: M \simeq L_\delta$ be the transitive collapse map. Then
\begin{itemize}
\item $\pi(\omega_1)=\mu$

\item $\pi(S)=S \cap \mu,$ and $\pi(S_*)=S_* \cap \mu,$

\item $\pi(\langle  (s_\delta, c_\delta): \delta \in S        \rangle)=\langle  (s_\gamma, c_\gamma): \gamma \in S \cap \mu        \rangle$.
\end{itemize}
As $\mu \in S_*,$ it follows from the definition of $S_*$ that
$ (s_\mu, c_\mu)$ is not  definable in  $(L_{F(\mu)}, \in).$

On the other hand,
$(s_\mu, c_\mu)$ is uniformly definable using the sequence
$\langle  (s_\gamma, c_\gamma): \gamma \in S \cap \mu        \rangle$,
hence $(s_\mu, c_\mu)$ is definable in  $L_{\delta}.$
Now note that $(L_{\delta}, \in) \models$``$\mu$ is uncountable'',
hence $F(\mu) > \delta$, and thus $(s_\mu, c_\mu)$ is definable in  $(L_{F(\mu)}, \in)$, a contradiction.
\end{proof}

Now suppose that $X \subseteq \omega_1$ is given.
It follows that the set
\[
T=\{\delta \in S: X \cap \delta=s_\delta  \}
\]
is stationary in $\omega_1$. Let $\delta \in C \cap T.$ It then follows that  $X \cap \delta=s_\delta$
and $s_\delta \text{~is  definable in ~} (L_{F(\delta)}, \in)$. Thus
$X \cap \delta \text{~is  definable in ~} (L_{F(\delta)}, \in)$, as requested.
\end{proof}

\subsection{Consistency of $\Diamond+\phi$}
We now show that in some forcing extension of $L$, $\Diamond+\phi$ holds.
\begin{lemma}
\label{lem3} Assume $V=L,$ and let  $S_1, S_2$ be  a partition of $\Lim(\omega_1)$
  into disjoint stationary sets. Suppose $\varphi_1(R,   F)$ holds. Then there exists a $\{ S_1\} $-complete proper forcing notion $\mathbb{P}$,
   which adds a set $Y$ witnessing $\varphi_3(R,  S_2, F, Y)$ holds.
  In particular,
     $\Diamond+\phi$
holds in $L[G_{\mathbb{P}}]$.
\end{lemma}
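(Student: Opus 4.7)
I would define $\mathbb{P}$ in $L$ as follows. Using the canonical $<_L$-well-order, enumerate uniformly in $\delta\in\Lim(\omega_1)$ the parameter-definable subsets of $\delta$ in $L_{F(\delta)}$ as $\langle A^\delta_n:n<\omega\rangle$; this is possible because $L_{F(\delta)}\models|\delta|=\aleph_0$. A condition is a pair $p=(y_p,w_p)$ where $y_p\colon\alpha_p\to 2$ for some countable limit ordinal $\alpha_p$, $w_p\subseteq S_2$ is countable with $w_p\supseteq S_2\cap(\alpha_p+1)$, and for every $\delta\in w_p$ with $\delta\leq\alpha_p$ and every $n<\omega$, $y_p\rest\delta\neq A^\delta_n$; extension is coordinatewise. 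A first step is the density lemmas: $\{p:\alpha_p>\gamma\}$ and $\{p:\delta\in w_p\}$ are dense for each $\gamma<\omega_1$ and $\delta\in S_2$. One extends $y_p$ by transfinite recursion on $\otp((w_p\cup S_2)\cap(\alpha_p,\alpha'])$, using at stage $\eta$ the fresh interval $[\sup_{\zeta<\eta}\delta^\zeta,\delta^\eta)$ (of size $\geq\omega$ since $\delta^\eta$ is limit) to diagonalise against all $\omega$-many $A^{\delta^\eta}_n$.

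For $\{S_1\}$-completeness, take $N\prec(\cH(\chi),\in)$ countable with $\mathbb{P},S_1,S_2,F\in N$ and $\mu:=N\cap\omega_1\in S_1$, and a generic sequence $(p_n)_{n<\omega}\subseteq N$. Genericity yields $\alpha_{p_n}\uparrow\mu$, and because $w_{p_n}\supseteq S_2\cap(\alpha_{p_n}+1)$ is built into the definition of a condition, $\bigcup_n w_{p_n}$ automatically equals $S_2\cap\mu$ regardless of whether each $\delta\in S_2\cap\mu$ lies in $N$. Setting $y:=\bigcup_n y_{p_n}$ and $w:=\bigcup_n w_{p_n}$, the pair $q:=(y,w)$ has $\alpha_q=\mu\notin S_2$, so no fresh constraint arises at $\mu$; constraints at each $\delta<\mu$ in $S_2$ were already met by whichever $p_n$ first exceeded $\delta$. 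Hence $q$ is an upper bound.

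The hardest step is properness when $\mu:=N\cap\omega_1\in S_2$: the natural upper bound would need $y\neq A^\mu_n$ for every $n$, but $A^\mu_n\notin N$. The plan is to build the generic sequence outside $N$ while keeping each term inside $N$. Enumerate the dense open subsets of $\mathbb{P}$ lying in $N$ as $(D_n)_{n<\omega}$ and pick $\xi_n\in N\cap\mu$ cofinal in $\mu$ with $\xi_n\geq\alpha_{p_{n-1}}$ at stage $n$. For each $b\in\{0,1\}$ the set $E^b_n:=\{q\in D_n:\alpha_q>\xi_n,\ y_q(\xi_n)=b\}$ is dense below $p_{n-1}$ (since $y_q(\xi_n)$ may be freely prescribed at a fresh coordinate while the diagonalisation is arranged elsewhere) and lies in $N$, so by elementarity it contains an extension of $p_{n-1}$ in $N$. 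Working in $V$, choose $p_n\in E^{1-A^\mu_n(\xi_n)}_n\cap N$: the selection uses information outside $N$ but $p_n$ itself lies in $N$. Then $y:=\bigcup_n y_{p_n}$ satisfies $y(\xi_n)\neq A^\mu_n(\xi_n)$ for every $n$, so $q:=(y,(S_2\cap\mu)\cup\{\mu\})$ is a valid condition and is $N$-generic.

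Finally, in $L[G]$ the generic $Y:=\bigcup_{p\in G}y_p$ has domain $\omega_1$, and for each $\delta\in S_2$ some $p\in G$ has $\delta\in w_p$, forcing $Y\cap\delta\neq A^\delta_n$ for all $n$ and hence $Y\cap\delta$ to be not definable in $L_{F(\delta)}$; this gives $\varphi_3$. Since $\mathbb{P}$ is proper, $S_1$ and $S_2$ remain stationary in $L[G]$, so $\varphi_1$ and $\varphi_2$ hold as well (noting that $R$ and $F$ are absolute), and therefore $\phi$ holds. Combining $\{S_1\}$-completeness with Lemma \ref{f21} preserves $\Diamond_{S_1}$, which extends trivially to $\Diamond$ on $\omega_1$, completing the proof of $\Diamond+\phi$ in $L[G]$.
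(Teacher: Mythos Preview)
Your forcing is, up to cosmetics, the paper's: conditions are countable initial segments of the eventual $Y$ satisfying the non-definability constraint at every $\delta\in S_2$ below the top. (Your side component $w_p$ is redundant, since you already require $w_p\supseteq S_2\cap(\alpha_p{+}1)$ and only impose constraints at $\delta\le\alpha_p$.)

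Two local issues. First, your transfinite-recursion argument for density breaks when $S_2\cap(\alpha_p,\alpha']$ has limit points: if $\delta^\eta=\sup_{\zeta<\eta}\delta^\zeta$ the ``fresh interval'' is empty and you have no room left to diagonalise at stage $\eta$. The clean fix is a one-shot diagonalisation: enumerate all pairs $(\delta',A)$ with $\delta'\in S_2\cap(\alpha_p,\alpha']$ and $A$ definable in $L_{F(\delta')}$ (countably many), choose distinct points $\zeta_{\delta',A}\in(\alpha_p,\delta')$, and set $y_q(\zeta_{\delta',A})\ne A(\zeta_{\delta',A})$. Second, $E^b_n$ is \emph{not} dense below $p_{n-1}$: any $r\ge p_{n-1}$ with $\alpha_r>\xi_n$ already has $y_r(\xi_n)$ fixed, so at most one of $E^0_n,E^1_n$ is reachable from $r$. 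What you actually need (and what holds) is merely that \emph{some} extension of $p_{n-1}$ lies in $E^b_n$; extend $p_{n-1}$ first to height just past $\xi_n$ with $y(\xi_n)=b$ (using the corrected density), then into $D_n$. Elementarity places such an extension in $N$.

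With these repairs your proof works. The substantive difference from the paper is in how the constraint at $\mu=N\cap\omega_1\in S_2$ is handled. The paper picks a cofinal $\omega$-sequence $\eta_\mu$ \emph{definable} in $L_{F(\mu)}$ and a real $c_\mu$ \emph{not} so definable, and builds the generic sequence so that $\eta_\mu(n)\in\bigcup_m p_m\iff c_\mu(n)=1$; were $\bigcup_m p_m$ definable in $L_{F(\mu)}$, so would $c_\mu$ be. Your direct diagonalisation against the enumerated $A^\mu_n$ achieves the same end and is arguably more transparent; the paper's coding trick, on the other hand, avoids having to fix a uniform enumeration of the definable subsets.
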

\begin{proof}
Let $\mathbb{P}$ be the set of all conditions  $p$ where:
\begin{enumerate}
\item[$(\ast)_1$] $p$ is a countable subset of $\omega_1$,

\item[$(\ast)_2$] $\max(p)$ exists,

\item[$(\ast)_3$] for all $\delta \in S_2 \cap (\max(p)+1), p \cap \delta$ is not definable in $ (L_{F(\delta)})^{(\omega_1, R)}.$
\end{enumerate}
Given two conditions $p, q$ let us say that $p \leq q$ ($q$ is stronger than $p$), iff $p =q \cap (\max(p)+1)$.

\begin{claim}
\label{claim2}
$\mathbb{P}$ is proper.
\end{claim}
\begin{proof}
Suppose $\chi$ is large enough regular and
$N \prec (\cH(\chi), \in)$ is countable such that:
\begin{itemize}
 \item $N= \bigcup_{n<\omega} N_n$, where $\langle N_n: n<\omega \rangle$ is a $\prec$-increasing sequence
 of elementary submodels of $N$ with $\{N_n: n<\omega \} \subseteq N,$
\item $ R, F, S_1, S_2, \MPB \in N_0$.\footnote{Note that the class of all countable models $N \in [\cH(\chi)]^{\aleph_0}$  as above forms a club of
$[\cH(\chi)]^{\aleph_0}$, thus it suffices to check properness with respect to such models.}
\end{itemize}
 Let also $p \in \MPB \cap N$.  We have to find $q \geq p$
 which is an $(N, \MPB)$-generic condition. We may assume that $p \in N_0.$

 Let $\delta=N \cap \omega_1$ and for each $n<\omega$ set $\delta_n=N_n \cap \omega_1$.
Choose an increasing $\omega$-sequence $\eta_\delta=\langle \eta_\delta(n): n<\omega     \rangle$, definable  in $(L_{F(\delta)})^{(\omega_1, R)}$, coding
 a cofinal sequence in $\delta$ with $\eta_\delta(0) > \max(p)$. Such a sequence exists by the choice of $F(\delta).$
 Let also $c_\delta=\langle c_\delta(n): n<\omega    \rangle$
 be a real, not definable in  $(L_{F(\delta)})^{(\omega_1, R)}$.

Now let $\langle \mathcal{D}_n: n<\omega        \rangle$
be an enumeration of dense open subsets of $\mathbb{P}$ in $N$. Following ideas from \cite{shelah77}, we  define an increasing sequence
 $\langle p_m: m<\omega   \rangle$ of conditions such that:
 \begin{enumerate}
 \item $p_0=p,$

 \item For all $n< \omega$, there exists $m<\omega$ such that $p_{m} \in \mathcal{D}_n,$

 \item For all  $n<\omega,$
 \[
 \eta_{\delta}(n) \in \bigcup_{m<\omega}p_m~~ \iff~~ c_{\delta}(n)=1.
 \]
 \end{enumerate}
 To start set $p_0=p$. Note that $p \cap \{ \eta_\delta(n): n<\omega \}=\emptyset.$ Let us define $p_1$.
 Let $k_1< \omega$ be such that $\{\eta_\delta(n): n<\omega   \} \cap N_{1}= \{\eta_\delta(n): n<k_1   \}$, and let $\mu_1 > \delta_0$ be such that $\sup\{\eta_\delta(n): n<k_1   \} < \mu_1 < \delta_1$ with $\mu_1 \notin S_2$. Set
\begin{center}
 $q_1 = p_0 \cup \{\eta_\delta(n): n< k_1$ and $c_\delta(n)=1     \} \cup \{\mu_1\}$.
 \end{center}
 Note that $q_1 \in \MPB \cap N_1$. Now let $p_1$ be such that:
 \begin{enumerate}
 \item[(4)] $p_1 \in N_1,$
 \item[(5)] $p_1 \geq q_1$,
 \item[(6)] $p_1 \in \bigcap\{ \mathcal{D}_n: n< 1$ and $ \mathcal{D}_n \in N_1\}.$ If  $\mathcal{D}_0 \notin N_1$, set $p_1=q_1.$
 \end{enumerate}
 Now suppose that $1 \leq m<\omega$ and we have defined $p_m$. We define $p_{m+1}.$
 Let $k_{m+1} \geq k_m$ be such that
 $\{\eta_\delta(n): n<\omega   \} \cap (N_{m+1} \setminus N_m)= \{\eta_\delta(n): k_m \leq n<k_{m+1}   \}$, and let $\mu_{m+1} > \delta_m$ be such that $\sup\{\eta_\delta(n): n<k_{m+1}   \} < \mu_{m+1} < \delta_{m+1}$ with $\mu_{m+1} \notin S_2$.
 Set
\begin{center}
 $q_{m+1} = p_m \cup \{\eta_\delta(n): k_m \leq n< k_{m+1}$ and $c_\delta(n)=1     \} \cup \{\mu_{m+1}\}$.
 \end{center}
 Note that $q_{m+1} \in  \MPB \cap N_{m+1}$. Now let $p_{m+1}$ be such that:
 \begin{enumerate}
 \item[(7)] $p_{m+1} \in N_{m+1},$
 \item[(8)] $p_{m+1} \geq q_{m+1}$,
 \item[(9)] $p_{m+1} \in \bigcap\{ \mathcal{D}_n: n< m+1$ and $ \mathcal{D}_n \in N_{m+1}\}.$ If there are no such  $\mathcal{D}_n$'s, set $p_{m+1}=q_{m+1}.$
 \end{enumerate}
 Set $p=\bigcup_{m<\omega}p_m \cup \{\delta\}$. We claim that $p \in \MPB$. To show $p$ is a condition, it suffices to show that $p \cap \delta= \bigcup_{m<\omega}p_m$
is not definable in $(L_{F(\delta)})^{(\omega_1, R)}$. Suppose by the way of contradiction that $p \cap \delta$ is definable in $ (L_{F(\delta)})^{(\omega_1, R)}$.
As the sequence $\eta_\delta$ is definable in $ (L_{F(\delta)})^{(\omega_1, R)}$, it follows from clause (3) that
$\langle  c_\delta(n): n <\omega   \rangle$ is definable in $ (L_{F(\delta)})^{(\omega_1, R)}$, which is a contradiction.
 It is clear from our construction that
$p$  is an $(N, \MPB)$-generic condition.
\end{proof}
The above proof implies the following.
\begin{claim}
\label{claim1}
Assume $p \in \mathbb{P}$ and $\gamma > \max(p)$. Then there exists a condition  $q \geq p$
such that $\max(q) \geq \gamma.$
\end{claim}
The next claim guarantees that $\Diamond_{S_1}$ is preserved by $\MPB.$
\begin{claim}
\label{claim3}
$\mathbb{P}$ is $\{ S_1\} $-complete.
\end{claim}
\begin{proof}
Suppose $\chi$ is large enough regular and
$N \prec (\cH(\chi), \in)$ is countable such that $R, S_1, S_2, F, \MPB \in N$ and $\delta=N \cap \omega_1 \in S_1$. We show that the pair $(N, \MPB)$ is complete.
Thus let $\langle p_n: n<\omega \rangle$ be a generic sequence for $(N, \MPB)$. Set  $p=\bigcup_{n<\omega}p_n \cup \{\delta\}$.  Note that, by Claim
\ref{claim1}, $\sup\bigcup_{n<\omega}p_n=\delta$,
and since $\delta \notin S_2$, $p \in \mathbb{P}$. Then $p$ is an upper bound
in $\mathbb{P}$ for the sequence $\langle p_n: n<\omega \rangle$.
\end{proof}
Now let $G$ be $\mathbb{P}$-generic over $V$ and let $Y=  \bigcup \{p: p \in G  \}.$
\begin{claim}
\label{claim4}
The set $Y$ witnesses that $\varphi_3(R,   S_2, F, Y)$ holds in $V[G]$.
\end{claim}
\begin{proof}
This is clear.
\end{proof}
 It follows from Lemma \ref{f21} and Claim
\ref{claim3} that $\Diamond_{S_1}$ holds in $L[G]$. Finally note that
$\phi$ holds in $V[G]$:
\begin{itemize}
\item $\varphi_1(R,   F)$ holds by the choice of $R$ and $F$.

\item By Claim \ref{claim2},
$S_1$ and $S_2$ remain stationary in $L[G].$ It then follows that $\varphi_2(S_1, S_2)$ holds in $L[G]$ as well.


\item $\varphi_3(R,   S_2, F, Y)$ holds,   by Claim \ref{claim4}.
\end{itemize}
The lemma follows.
\end{proof}
\begin{proof}[Proof of Theorem \ref{th1}]
It follows from Lemmas \ref{lem1} and \ref{lem3}.
\end{proof}

\end{document}